\NeedsTeXFormat{LaTeX2e}

\documentclass{amsart}
\usepackage[T1]{fontenc}
\usepackage{latexsym,amssymb,amsmath,mathrsfs}

\usepackage{amsthm}
\usepackage{longtable}
\usepackage{epsfig}
\usepackage{hhline}
\usepackage{epic}

\catcode`\Ž=13
\defŽ{\'e}
\catcode`\ˆ=13
\defˆ{\`a}
\catcode`\=13
\def{\`e}
\catcode`\‰=13
\def‰{\^a}
\catcode`\=13
\def{\c c}
\catcode`\™=13
\def™{\^o}
\catcode`\š=13
\defš{\'E}
\catcode`\Š=13
\defŠ{\`A}
\catcode`\Ÿ=13
\defŸ{\^o}
\catcode`\=13
\def{\^e}
\catcode`\"=13
\def"{\^\i}
\catcode`\=13
\def{\`u}
\catcode`\•=13
\def•{\"\i}

   \newcommand{\cyc}[1]{\langle\,#1\,\rangle}
   
   \newcommand{\Irr}{\operatorname{Irr}}
   
   \newcommand{\sym}{\mathfrak{S}}

   \newcommand{\A}{\mathfrak A}
   
   \newcommand{\la}{\lambda}
    \newcommand{\laOw}{\lambda_0^{(w)}}
    \newcommand{\laiw}{\lambda_i^{(w)}}
        \newcommand{\lajw}{\lambda_j^{(w)}}
     \newcommand{\laOwm}{\lambda_0^{(w-1)}}
    \newcommand{\laiwm}{\lambda_i^{(w-1)}}

 \newcommand{\modp}{ \; (\mbox{mod} \; p)}

  \newcommand{\ba}{\beta}
  \newcommand{\sa}{\sigma}

 \newcommand{\ga}{\gamma}

  \newcommand{\p}{\bar{p}}
  
\newcommand{\bH}{\overline{H}}
\newcommand{\cal}[1]{\mathcal{#1}}

  \newcommand{\floor}[1]{\lfloor #1 \rfloor}

\long\def\symbolfootnote[#1]#2{\begingroup\def\thefootnote{\fnsymbol{footnote}}
\footnote[#1]{#2}\endgroup}

\newtheorem{theorem}{Theorem}[section] % 1st argument is your name for it
     % 2nd argument is what is printed
\newtheorem{corollary}[theorem]{Corollary}

\newtheorem{proposition}[theorem]{Proposition}

%\title[Malle-Navarro Conjecture for covering groups]% end with percent
  % {Blocks of covering groups with equal height zero degrees}

\title[Malle-Navarro Conjecture for covering groups]{On a conjecture of G. Malle and G. Navarro on nilpotent blocks}

\address{Institut de Math\'ematiques de Jussieu\\
Universit\'e Denis Diderot, Paris VII\\
UFR de Math\'ematiques\\
2 place Jussieu\\
F-75251 Paris Cedex 05\\
email: gramain@math.jussieu.fr}

\begin{document}
\maketitle

\begin{center}
{\Large{\bf{Jean-Baptiste Gramain}}}\\

\medskip

Institut de MathŽmatiques de Jussieu\\
UniversitŽ Denis Diderot, Paris VII\\
UFR de MathŽmatiques\\
2 place Jussieu\\
F-75251 Paris Cedex 05\\
email: gramain@math.jussieu.fr
\end{center}

\begin{abstract}
In a recent article, G. Malle and G. Navarro conjectured that the $p$-blocks of a finite group all of whose height 0 characters have the same degree are exactly the nilpotent blocks defined by M. BrouŽ and L. Puig. In this paper, we check that this conjecture holds for spin blocks of the covering group $2.\A_n$ of the alternating group $\A_n$, thereby solving a case excluded from the study of quasi-simple groups by Malle and Navarro.
\end{abstract}

\symbolfootnote[0]{2000 Mathematics Subject Classification 20C30 (primary), 20C15, 20C20 (secondary)}

\symbolfootnote[0]{Keywords: Representation Theory, Nilpotent Blocks, Symmetric Group, Covering Groups, Bar-Partitions}

\section{Introduction}\label{intro}

In a recent paper (\cite{Malle-Navarro}), G. Malle and G. Navarro have formulated a conjecture about nilpotent blocks of finite groups. The notion of nilpotent block was first introduced by M. BrouŽ and L. Puig in \cite{Broue-Puig}, and should be the most natural to study from a local point of view. However, the definition given by BrouŽ and Puig uses the Alperin-BrouŽ subpairs, making the detection of nilpotent blocks a difficult problem. One strong property of nilpotent blocks is that, if a $p$-block $B$ of a finite group $G$ is nilpotent, then all the height zero characters $\chi \in \Irr_0(B)$ have the same degree. In \cite{Malle-Navarro}, Malle and Navarro conjecture that the converse also holds, therefore giving a global characterization of nilpotent blocks which is visible in the character table of $G$.

\medskip
In their paper, Malle and Navarro prove that their conjecture is true whenever $B$ is the principal block of $G$ (\cite[Theorem 3.1]{Malle-Navarro}), or if the defect group $D$ of $B$ is normal in $G$ (\cite[Theorem 5.2]{Malle-Navarro}). They also prove that it holds whenever $D$ is abelian, provided Brauer's Height Zero Conjecture holds (\cite[Theorem 4.1]{Malle-Navarro}), and make considerable progress in the case of $p$-solvable groups. Finally, they give a proof of their conjecture for all finite quasi-simple groups (\cite[Theorem 6.1]{Malle-Navarro}), with the possible exception of quasi-isolated blocks of exceptional groups of Lie type in bad characteristic, and faithful blocks of the 2-fold covering group $2. \A_n$ of the alternating group $\A_n$ ($n \geq 14$).

\medskip
The objective of this paper is to prove that the covering group $2.\A_n$ does not in fact yield any counter-example to the conjecture of Malle and Navarro, i.e. that any block of $2.\A_n$ all of whose height zero characters have the same degree is nilpotent (Corollary \ref{finalresult}). In Section \ref{covering}, we introduce the classical results about characters and blocks we need to study the case of $2.\A_n$. In Section \ref{barpartitions}, we construct, for blocks of $2. \sym_n$ with non-abelian defect group, height zero characters with distinct degrees. Finally, Section \ref{restriction} is devoted to restricting these characters to $2.\A_n$ and checking that they do provide the desired result in this case.

Note that, even though our method is analogous to that used by Malle and Navarro in the case of $\A_n$, the fact that we use bar partitions and bars instead of partitions and hooks induces several complications. Also, there is no clear bar analogue of the relative hook-formula for character degrees they use in the symmetric group.

\section{Characters and blocks of covering groups}\label{covering}
In this section, we present an overview of the representation theory of the covering groups of $\sym_n$ and $\A_n$. These groups were first introduced and studied by I. Schur in \cite{Schur}. Unless stated otherwise, all the results in this section (and references for proofs) can be found for example in \cite{Olsson-Combinatorics}.

The symmetric group $\sym_n$ has, for $n \geq 4$, two non-isomorphic 2-fold covering groups (only one if $n=6$) which are isoclinic, and therefore have virtually identical representation theory. We therefore denote, slightly abusively, by $2.\sym_n$ one of these covering groups. Then $2.\sym_n$ has center $\cyc{z}$ of order 2, and $2.\sym_n/\cyc{z} \cong \sym_n$. The group $2. \sym_n$ has a (unique, normal) subgroup of index 2, which is the unique 2-fold covering group $2.\A_n$ of the alternating group $\A_n$. 

\smallskip
The irreducible complex characters of $2.\sym_n$ and $2.\A_n$ fall into two categories. If $\chi \in \Irr( 2.\sym_n)$ (or $\chi \in \Irr(2.\A_n)$), and if $z \in \ker(\chi)$, then $\chi$ is just lifted from an irreducible character of $\sym_n$ (or $\A_n$). Otherwise, $\chi$ is a faithful character, also called {\emph{spin character}}, and corresponds to a projective representation of $\sym_n$ or $\A_n$.

\smallskip
The spin characters of $2.\sym_n$ and $2.\A_n$ are canonically labelled by the {\emph{bar partitions}} of $n$, i.e. partitions of $n$ in distinct parts. If $\la=(a_1 > \cdots > a_m >0)$ is a bar partition of $n$, then we let $m(\la)=m$ and $\sa(\la)=(-1)^{n-m(\la)}$. If $\sa(\la)=1$, then $\la$ labels a unique spin character of $2.\sym_n$. The restriction to $2.\A_n$ of this character is the sum of two {\emph{associate}} spin characters, both labelled by $\la$. If, on the other hand, $\sa(\la)=-1$, then $\la$ labels two associate spin characters of $2.\A_n$; both have the same restriction to $2.\A_n$, which is the unique spin character of $2.\A_n$ labelled by $\la$.

\smallskip
The notion that replaces that of hooks in partitions is given by {\emph{bars}} in bar partitions. If $\la=(a_1 > \cdots > a_m >0)$ is a bar partition of $n$, then the set of {\emph{bar lengths}} in $\la$ is$$\overline{{\cal H}}(\la)=\displaystyle \bigcup_{1 \leq i \leq m} \{ 1, \, \ldots , \, a_i \} \cup \{ a_i + a_j \, | \, j>i\} \setminus \{ a_i - a_j \, | \, j >i \} .$$
For any integer $\ell$, we call {\emph{$\bar{\ell}$-weight}} of $\la$ the number of bars in $\la$ whose length is divisible by $\ell$. Such a bar is called an $(\ell)$-bar.

Writing $\bH(\la)$ for the product of all the bar lengths in $\la$, we have, in analogy with the Hook-Length Formula for the degree of characters of $\sym_n$, that any spin character of $2.\sym_n$ labelled by $\la$ has degree $2^{\floor{(n-m(\la))/2}} \frac{n!}{\bH(\la)}$.

\medskip
If we now take a prime $p$, then the distribution of irreducible characters of $2.\sym_n$ and $2.\A_n$ into $p$-blocks depends on the parity of $p$.

If $p$ is odd, then every $p$-block $B$ of $2.\sym_n$ or $2. \A_n$ contains either no spin character, or only spin characters. A block with no spin character is really just a block of $\sym_n$ or $\A_n$, and its defect groups are the same as in these groups. In particular, the conjecture of Malle and Navarro holds for these blocks of $2.\A_n$ because it holds in $\A_n$ (by \cite[Corollary 9.3]{Malle-Navarro}). If $B$ contains only spin characters, then $B$ is refered to as a {\emph{spin block}}, or {\emph{faithful block}}. Such a block has either defect 0, hence contains a unique spin character (labelled by a bar partition with $\p$-weight 0), or it consists exactly of all the spin characters labelled by bar partitions with a given {\emph{$\p$-core}} (the bar partition obtained by removing from a bar partition all the bars of length divisible by $p$). This is known as the Morris Conjecture, which is the analogue of the Nakayama Conjecture for the symmetric group, and was proved in \cite{Humphreys-Blocks} and \cite{Cabanes}. Each spin block $B$ of $2.\sym_n$ of $\p$-weight $w \geq 1$ covers a unique block $B^*$ of $2.\A_n$, which is labelled by the same $\p$-core (and each spin block of $2.\A_n$ is covered by a unique spin block of $2.\sym_n$). The defect groups of $B$ and $B^*$ are entirely determined by $w$, and are the same as the defect groups of any block of $p$-weight $w$ in $\sym_n$. We also see from the degree formula that a spin character of $B$ has height 0 if and only if it's labelled by a bar partition of ($\p$-weight $w$ and) maximal $\p^2$-weight, maximal $\p^3$-weight, etc. Finally, note that Brauer's Height Zero Conjecture is known to be true in this case (see Olsson \cite{Olsson-blocks}). To check that Malle and Navarro's conjecture holds in this case, it is thus sufficient to check that, if such a block has non-abelian defect groups, then it contains two non-spin characters of height 0 with distinct degrees.

\smallskip
If, on the other hand, $p=2$, then each $p$-block of positive defect of $2.\sym_n$ or $2.\A_n$ contains both spin characters and non-spin characters (for a complete description of the 2-block structure of $2.\sym_n$ and $2.\A_n$, see \cite{Bessenrodt-Olsson-2blocks}).

By \cite[Corollary 9.3]{Malle-Navarro}, any such block $B$ of $2.\A_n$ all of whose height zero non-spin characters have the same degree corresponds to a block $b$ of $\A_n$ with the same property, and thus of defect 0 in $\A_n$. Any such $B$ therefore has cyclic defect group of order 2 in $2.\A_n$. Since Brauer's Height Zero Conjecture holds in this case, $B$ is nilpotent. Hence the conjecture of Malle and Navarro is true for $2.\A_n$ when $p=2$ (see Corollary \ref{finalresult}).

\medskip
From now on, we therefore always suppose that $p$ is an odd prime, and only consider spin blocks.

\section{Bar-partitions with different products of bar lengths}\label{barpartitions}

Take any odd prime $p$, and let $\ga$ be a $\p$-core.
% labelling the spin $p$-blocks $B$ and $B^*$ of $2.\sym_n$ and $2. \A_n$ respectively. Assume furthermore that $B$ and $B^*$ have positive defect. 
Until the last result of this section, we assume furthermore that $\ga \neq \emptyset$.
% If $\ga= \emptyset$, then $B$ and $B^*$ are principal blocks, and, by \cite[Theorem 3.1]{Malle-Navarro}, all characters in $\Irr_0(B)$ (respectively $\Irr_0(B^*)$) have the same degree if and only if $B$ (respectively $B^*$) is nilpotent. We therefore suppose from now on that $\ga \neq \emptyset$.

We write $\ga=(a_1 > \cdots > a_m >0)$ and $X_{\ga}=\{ a_1, \, \ldots , \, a_m \}$. The bars in $\ga$ correspond to
\begin{itemize}

\item
pairs $(x, \, y)$, with $0 \leq x < y$, $x \not \in X_{\ga}$ and $y \in X_{\ga}$ (these have length $y-x$, and type 2 if $x=0$ and type 1 if $x \neq 0$), and

\item
pairs $(i, \, j)$, with $1 \leq i < j \leq m$ (these have length $a_i + a_j$ and type 3).
\end{itemize}

We start by giving three easy consequences of the fact that $\ga$ is a $\p$-core. Firstly, for all $1 \leq i \leq m$, we have $a_i \not \equiv 0 \modp$ (otherwise, $\ga$ would have a $(p)$-bar of type 2). We can thus arrange the $a_i$'s according to their value mod $p$, and let
$$X_0= \{ a_i \, | \, a_i  \equiv 0 \modp \} = \emptyset \; \mbox{and} \; X_j= \{ a_i \, | \, a_i  \equiv j \modp \}  \; \mbox{for} \; 1 \leq j \leq p-1.$$If $X_j \neq \emptyset$, then $X_{p-j}=\emptyset$ (otherwise, we would have $a_t=pk_t+j \in X_j$ and $a_s=pk_s+(p-j) \in X_{p-j}$, and thus the $(p)$-bar $a_t+a_s=p(k_t+k_s+1)$ of type 3). Finally, if $X_j \neq \emptyset$, then $X_j=\{ j + kp \, , \, 0 \leq k \leq d_j \}$. Indeed:
\begin{itemize}
\item
$j \in X_j$ since, otherwise, $(j, \, a_i)$ (for any $a_i \in X_j$) would give a $(p)$-bar of type 1 (note that, obviously, if $j \in X_{\ga}$, then $j \in X_j$);
\item
if $j + sp \in X_j$ for some $s>0$, then $j + tp \in X_j$ for all $0 \leq t < s$ (since, otherwise, $(j+tp, \, j + sp)$ would give a $(p)$-bar of type 1).
\end{itemize}
If $X_j = \emptyset$, then we set $d_j = -1$. For any $0 \leq j \leq p-1$, we also write $e_j=j+d_j p$.

%Note that our definition of $d_j$ differs from that of Malle and Navarro (see \cite[Theorem 9.1]{Malle-Navarro}), which is the number of beads on the $j$-th runner of the $p$-abacus of $\ga^*$, the partition (which is a $p$-core) corresponding to the $\beta$-set $X_{\ga}$. Our $d_j$ is always exactly 1 less.

%Here again, our definition differs from that of Malle and Navarro.

\medskip
Now take any integer $w \geq 1$. We define some bar partitions with $\p$-core $\ga$ and $\p$-weight $w$.
\begin{itemize}
\item
Define $\laOw$ by letting $X_{\laOw}=X_{\ga} \cup \{ pw \}= \bigcup_{j=0}^{p-1} X_j^{0,(w)}$ (so that $X_0^{0,(w)}=\{pw\}$ and $X_j^{0,(w)}=X_j$ for $1 \leq j \leq p-1$). Note that $\laOw$ can be defined even if $\ga = \emptyset$.
\item
If $X_i \neq \emptyset$ (for some $1 \leq i \leq p-1$), then define $\laiw$ by letting $X_{\laiw}=X_{\ga} \cup \{ e_i  + pw \} \setminus \{ e_i \}= \bigcup_{j=0}^{p-1} X_j^{i,(w)}$ (so that $X_0^{i,(w)}=\emptyset $, $X_j^{i,(w)}=X_j$ for $1 \leq j \neq i \leq p-1$, and $X_i^{i,(w)}=X_i \cup \{ i + d_i p + pw \} \setminus \{i + d_i p\}$).

\end{itemize}

Let also $\la_0^{(0)}=\la_i^{(0)}=\ga$.

Note that $\laOw$ and (if $X_i \neq \emptyset$) $\laiw$ are bar partitions of $|\ga|+pw$, with $\p$-weight {\bf{at least}} $w$ and $\p$-core $\ga$, and thus $\p$-weight {\bf{exactly}} $w$.

Note also that, by definition, all the $(p)$-bars in $\laOw$ are of type 1 or of type 2 (and there's exactly one of these), while, if $X_i \neq \emptyset$, then all the $(p)$-bars in $\laiw$ are of type 1.

Finally, note that $m(\laOw)=m(\ga)+1$, while, if $X_i \neq \emptyset$, then $m(\laiw)=m(\ga)$.

\medskip
First, we suppose that $X_i \neq \emptyset$, and we want to compare $\bH(\laiw)$ and $\bH(\laiwm)$ (the products of all bar lengths in $\laiw$ and $\laiwm$ respectively). Note that, if $w=1$, then $\laiwm=\ga$. Write $\bH(\laiw)=\bH_m(\laiw)\bH_u(\laiw)$ and $\bH(\laiwm)=\bH_m(\laiwm)\bH_u(\laiwm)$, where $\bH_m$ (respectively $\bH_u$) stands for the product of all mixed (respectively unmixed) bar lengths, i.e. of type 3 (respectively of type 1 or 2).

\begin{proposition}\label{barsi}
With the above notation, we have
$$\displaystyle \frac{\bH_u(\laiw)}{\bH_u(\laiwm)} = pw \prod_{0 \leq j \neq i \leq p-1} | p(w-1) + e_i - e_j | $$and$$\displaystyle \frac{\bH_m(\laiw)}{\bH_m(\laiwm)} = \prod_{j \neq i, \, X_j \neq \emptyset} \frac{e_i+e_j+pw}{e_i+p(w-1)+j}.$$
\end{proposition}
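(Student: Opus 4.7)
The plan is to exploit the product formula
\[
\bH(\la) \;=\; \prod_{x \in X} x! \cdot \frac{\prod_{x<y,\,x,y\in X}(x+y)}{\prod_{x<y,\,x,y\in X}(y-x)},
\]
where $X = X_\la$ and the products run over unordered pairs in $X$ (written with $x < y$). This cleanly separates as $\bH_m(\la) = \prod_{x<y}(x+y)$, which collects the type-$3$ bar lengths, and $\bH_u(\la) = \prod_x x!/\prod_{x<y}(y-x)$, which collects the type-$1$ and type-$2$ bar lengths. The key simplification is that the $\beta$-sets of $\laiw$ and $\laiwm$ differ in exactly one element: $B := e_i + p(w-1)$ is replaced by $A := B + p = e_i + pw$, while the common part $F := X_{\ga} \setminus \{e_i\}$ is unchanged. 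Hence
\[
\frac{\bH_m(\laiw)}{\bH_m(\laiwm)} = \prod_{y \in F}\frac{y+A}{y+B}, \qquad \frac{\bH_u(\laiw)}{\bH_u(\laiwm)} = \frac{A!}{B!}\,\prod_{y \in F}\frac{|y-B|}{|y-A|}.
\]

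Next I would split $F$ according to residue class modulo $p$ using the decomposition $F = \bigsqcup_{j \neq i,\, X_j \neq \emptyset}X_j \,\sqcup\, (X_i \setminus \{e_i\})$. Within each nonempty $X_j$ with $j \neq i$, writing $y_k = j + kp$ for $k = 0, \ldots, d_j$, both identities $y_{k+1} + B = y_k + A$ and $y_{k+1} - A = y_k - B$ cause each product to telescope. The mixed piece collapses to $(e_j + A)/(j + B) = (e_i + e_j + pw)/(e_i + j + p(w-1))$, and the unmixed piece to $|e_j - B|/|j - A| = |p(w-1) + e_i - e_j|/(A - j)$. Multiplying the mixed collapses over all such $j$ gives the formula for $\bH_m(\laiw)/\bH_m(\laiwm)$ directly.

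For the unmixed ratio I would then combine the telescoped pieces with $A!/B! = \prod_{k=1}^p(B+k) = \prod_{j=0}^{p-1}(A-j)$, a product of $p$ consecutive integers, one per residue class modulo $p$. The main arithmetic input is the identity $A - i = p(d_i + w)$; combined with the telescoping contribution $w/(d_i + w)$ coming from $X_i \setminus \{e_i\}$, it extracts the prefactor $pw$. For residues $j \neq i$ with $X_j \neq \emptyset$, the $(A-j)$ factor from $A!/B!$ cancels the denominator of the telescoping; and for the remaining residues, where $X_j = \emptyset$ and $e_j = j - p$, one checks $A - j = |p(w-1) + e_i - e_j|$, so these surviving factors already match the desired terms. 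The principal obstacle is precisely this final bookkeeping: correctly pairing each of the $p$ residue-class factors of $A!/B!$ with its telescoped counterpart to produce the single clean product over all $j \neq i$ multiplied by $pw$.
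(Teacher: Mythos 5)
Your framework---writing $\bH(\la)=\prod_{x\in X}x!\cdot\prod_{x<y}(x+y)\big/\prod_{x<y}(y-x)$ for the $\beta$-set $X=X_\la$, noting that $X_{\laiw}$ and $X_{\laiwm}$ differ only in the single element $B=e_i+p(w-1)$ versus $A=B+p$, and telescoping along residue classes mod $p$---is sound, and your treatment of the unmixed ratio is correct and complete in outline: the bookkeeping you describe (pairing the $p$ consecutive factors of $A!/B!$ with the telescoped denominators, extracting $pw$ from $(A-i)\cdot w/(d_i+w)$) does work out and recovers the first displayed formula. This is a genuinely more self-contained route than the paper's, which for the unmixed part simply invokes a hook-length comparison from Malle--Navarro applied to the ordinary partitions attached to these $\beta$-sets.

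The mixed ratio, however, has a real gap. You correctly reduce it to $\prod_{y\in F}(y+A)/(y+B)$ with $F=X_{\ga}\setminus\{e_i\}$ and you yourself write down the decomposition $F=\bigsqcup_{j\ne i}X_j\sqcup(X_i\setminus\{e_i\})$, but you then telescope only the blocks with $j\ne i$ and assert this ``gives the formula directly''. The block $X_i\setminus\{e_i\}=\{i,i+p,\ldots,e_i-p\}$ also contributes: telescoping it by the same identity $y_{k+1}+B=y_k+A$ leaves $\frac{(e_i-p)+A}{i+B}=\frac{2e_i+p(w-1)}{e_i+i+p(w-1)}$, which equals $1$ only when $d_i=0$. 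Carried out honestly, your method yields
$$\frac{\bH_m(\laiw)}{\bH_m(\laiwm)}=\frac{2e_i+p(w-1)}{e_i+i+p(w-1)}\prod_{j\ne i,\,X_j\ne\emptyset}\frac{e_i+e_j+pw}{e_i+p(w-1)+j},$$
not the stated formula. The extra factor is genuinely present, so the step you skipped is exactly where the statement fails: take $p=3$, $\ga=(4,1)$ (so $X_1=\{1,4\}$, $e_1=4$, $d_1=1$), $i=1$, $w=1$; then $\la_1^{(1)}=(7,1)$, whose unique mixed bar has length $8$ while that of $\ga$ has length $5$, giving ratio $8/5$, whereas the Proposition's right-hand side is the empty product $1$. (The paper's own proof makes the same omission, listing as non-common only the mixed bars joining the moved part to the $X_j$ with $j\ne i$; the bars joining it to the other elements of $X_i$ are not common either. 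The correction propagates to Corollary \ref{barstwo}, where $p(w-1)+e_i+i$ should read $p(w-1)+2e_i$; the inequalities in Theorem \ref{bars} survive, since $e_{i_1}>e_{i_2}$ still forces $p(w-1)+2e_{i_1}>p(w-1)+2e_{i_2}$.) To repair your proof, include the $X_i\setminus\{e_i\}$ block in the mixed telescoping and state the corrected formula.
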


\begin{proof}
To prove the first part, one can simply compare explicitly all the unmixed bars in $\laiw$ and $\laiwm$. Another way is to apply \cite[Theorem 9.1]{Malle-Navarro} (or the special case that immediately follows) to the partitions $(\laiw)^*$ and $(\laiwm)^*$, which correspond to the $\ba$-sets  $X_{\laiw}$ and  $X_{\laiwm}$. Indeed, these have $p$-core $\ga^*$ (the partition corresponding to the $\beta$-set $X_{\ga}$), $p$-weight $w$ and $w-1$ respectively, and their hooks and hook-lengths correspond exactly to the unmixed bars and their lengths in $\laiw$ and $\laiwm$.
%The fact that our definition of the $e_j$'s differs from that of Malle and Navarro doesn't matter, since only the difference $e_i-e_j$ appears, and this doesn't depend on the definition used.

\medskip

We now turn to the second part. The only mixed bars which are not common to $\laiw$ and $\laiwm$ have lengths:
\begin{itemize}
\item
in $\laiw$: $\{ i+d_i p +pw +x_k^{(j)} \, | \,  j \neq i, \, X_j \neq \emptyset , \, 0 \leq k \leq d_j \}$, where $x_k^{(j)}=j+kp$;
\item
in $\laiwm$: $\{ i+d_i p +p(w-1) +x_k^{(j)} \, | \,  j \neq i, \, X_j \neq \emptyset , \, 0 \leq k \leq d_j \}$.
\end{itemize}
Since $i+d_i p +pw +x_k^{(j)}=i+d_i p +p(w-1) +x_{k+1}^{(j)}$, when we divide out, we're only left with $k=d_j$ in $\laiw$ and $k=0$ in $\laiwm$. This yields
$$\displaystyle \frac{\bH_m(\laiw)}{\bH_m(\laiwm)} =\prod_{j \neq i, \, X_j \neq \emptyset} \frac{i +d_ip+pw+j +d_jp}{i+d_ip+p(w-1)+j}= \prod_{j \neq i, \, X_j \neq \emptyset} \frac{e_i+e_j+pw}{e_i+p(w-1)+j}.$$

\end{proof}
\begin{corollary}\label{barstwo}
If $X_i \neq \emptyset$, then
$$ \frac{\bH(\laiw)}{\bH(\laiwm)} = pw (p(w-1) + e_i + i) \prod_{j \neq i \atop X_j \neq \emptyset} | p(w-1) + e_i - e_j |   (pw+e_i+e_j) \prod_{X_k = \emptyset \atop X_{p-k} = \emptyset} | pw+e_i-k | .$$
\end{corollary}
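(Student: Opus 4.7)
The plan is to multiply the two ratios from Proposition~\ref{barsi} and then carry out a careful rearrangement in which certain factors in the unmixed ratio cancel the denominators $e_i + p(w-1) + j$ coming from the mixed ratio.

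First I would set up bookkeeping. Let $S = \{j \in \{1,\dots,p-1\} : X_j \neq \emptyset\}$ and $T = \{j \in \{0,\dots,p-1\} : X_j = \emptyset\}$; by assumption $i \in S$, and the $\p$-core consequences established in the section give $0 \in T$ together with $p - j \in T$ for every $j \in S$. For $j \in T$, the convention $d_j = -1$ yields $e_j = j - p$, so that $|p(w-1) + e_i - e_j| = |pw + e_i - j|$. With this, the unmixed ratio becomes
$$\frac{\bH_u(\laiw)}{\bH_u(\laiwm)} = pw \cdot \prod_{j \in S,\, j \neq i} |p(w-1) + e_i - e_j| \cdot \prod_{j \in T} |pw + e_i - j|.$$

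The key step is to split $T = T_1 \sqcup T_2$, where $T_1 = \{p - j' : j' \in S\}$ and $T_2 = \{k \in T : X_{p-k} = \emptyset\}$. For $j = p - j' \in T_1$ one has $|pw + e_i - j| = p(w-1) + e_i + j'$ (this quantity being positive), so that
$$\prod_{j \in T_1} |pw + e_i - j| = \prod_{j' \in S} \bigl(p(w-1) + e_i + j'\bigr).$$
Multiplying by the denominator $\prod_{j \in S,\, j \neq i} (e_i + p(w-1) + j)^{-1}$ coming from $\bH_m(\laiw)/\bH_m(\laiwm)$, every factor on the left cancels a corresponding factor on the right, except the single surviving one $(p(w-1) + e_i + i)$ indexed by $j' = i$.

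Collecting what remains, the factors indexed by $j \in S$ with $j \neq i$ combine to $\prod_{j \neq i,\, X_j \neq \emptyset} |p(w-1) + e_i - e_j|\,(pw + e_i + e_j)$, while the factors indexed by $T_2$ give exactly $\prod_{X_k = \emptyset,\, X_{p-k} = \emptyset} |pw + e_i - k|$, matching the right-hand side of the corollary. The main obstacle is purely combinatorial bookkeeping: one must verify that the pairing $j \leftrightarrow p-j$ lines up precisely the unmixed factors from $T_1$ with the mixed denominators indexed by $S$, and that the one leftover piece is the claimed $p(w-1) + e_i + i$. Once this pairing is set up, the corollary follows by direct multiplication.
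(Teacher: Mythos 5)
Your proposal is correct and follows essentially the same route as the paper: split the unmixed product from Proposition~\ref{barsi} according to whether $X_j$ is empty, use $e_k = k-p$ for empty $X_k$, pair each $j'\in S$ with $p-j'\in T$ to cancel the mixed denominators, and observe that the unpaired factor coming from $p-i$ survives as $p(w-1)+e_i+i$. The bookkeeping with $T_1$ and $T_2$ is just a cleaner organization of the same cancellation the paper performs.
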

\begin{proof}
From Proposition \ref{barsi}, we easily obtain, if $X_i \neq \emptyset$,
$$\displaystyle \frac{\bH(\laiw)}{\bH(\laiwm)} = pw \prod_{0 \leq j \neq i \leq p-1} | p(w-1) + e_i - e_j |  \; \; \prod_{j \neq i,  \, X_j \neq \emptyset} \frac{e_i+e_j+pw}{e_i+p(w-1)+j}.$$
Separating the first product according to whether $X_j = \emptyset$ or not, we get
$$pw \prod_{ j \neq i \atop X_j \neq \emptyset} | p(w-1) + e_i - e_j | (pw + e_i + e_j)   \prod_{k \neq i \atop X_k = \emptyset} |p(w-1) + e_i - e_k | \prod_{j \neq i  \atop X_j \neq \emptyset} \frac{1}{p(w-1)+e_i+j}.$$
Now, if $X_j \neq \emptyset$, then $X_{p-j}= \emptyset$ (since $\ga$ is a $\p$-core), so that $p-j \neq i$ (since $X_i \neq \emptyset$), and $j=-[(p-j)-p]=-e_{p-j}$. We thus have
$$ \prod_{k \neq i,  \, X_k = \emptyset} |p(w-1) + e_i - e_k | \prod_{j \neq i,  \, X_j \neq \emptyset} \frac{1}{p(w-1)+e_i+j}$$
$$
= \prod_{k \neq i,  \, X_k = \emptyset} |p(w-1) + e_i - e_k | \prod_{j \neq i, \, p-j \neq i,  \, X_j \neq \emptyset, \, X_{p-j}=\emptyset} \frac{1}{p(w-1)+e_i-e_{p-j}}$$
$$
=  (p(w-1) + e_i - e_{p-i})\prod_{k \neq i,  \, X_k = \emptyset, \atop X_{p-k} \neq \emptyset} \frac{ |p(w-1) + e_i - e_k |}{p(w-1)+e_i-e_k} \prod_{k \neq i,  \atop X_k = X_{p-k}=\emptyset} | p(w-1)+e_i-e_k |.$$
And, since $X_i \neq \emptyset$, we have $e_{p-i}=-i$ and $p(w-1) + e_i - e_{p-i}=p(w-1) + e_i + i$.

If $X_k = \emptyset$, then $e_k=k-p<0$, so that $p(w-1)+e_i-e_k>0$ (since $e_i \geq 0$), whence the first product is $1$, and $p(w-1)+e_i-e_k=pw+e_i-k$. Finally, if $X_k = \emptyset$, then $k \neq i$. We therefore get, if $X_i \neq \emptyset$,
$$\displaystyle \frac{\bH(\laiw)}{\bH(\laiwm)} = pw (p(w-1) + e_i + i) \prod_{j \neq i, \atop X_j \neq \emptyset} | p(w-1) + e_i - e_j |   (pw+e_i+e_j) \prod_{X_k = \emptyset , \atop X_{p-k} = \emptyset} | pw+e_i-k | .$$

\end{proof}

{\bf{Remark:}} If $X_i \neq \emptyset$, then $e_i \geq i$, so that $e_i-k \geq i-k >-p$ and, whenever $w \geq 1$, we have $| pw + e_i - k| = pw+e_i-k$.

\medskip

We now establish the analogous result for the bar partitions $\laOw$ and $\laOwm$:

\begin{proposition}
With the above notation, we have
\begin{itemize}
\item
if $w>1$, then
$$\displaystyle \frac{\bH_u(\laOw)}{\bH_u(\laOwm)} = pw \prod_{0< j  \leq p-1} | p(w-1) - e_j | $$and$$ \displaystyle \frac{\bH_m(\laOw)}{\bH_m(\laOwm)} = \prod_{j \neq 0, \, X_j \neq \emptyset} \frac{e_j+pw}{p(w-1)+j};$$
\item
if $w=1$, then
$$\displaystyle \frac{\bH_u(\laOw)}{\bH_u(\laOwm)} = p \frac{\prod_{j \neq 0} | e_j |}{\prod_{1 \leq i \leq m} a_i }$$and$$ \displaystyle \frac{\bH_m(\laOw)}{\bH_m(\laOwm)} = \prod_{j \neq 0, \, X_j \neq \emptyset} \; \;  \prod_{0 \leq k \leq d_j} (p+j+kp) = \prod_{1 \leq i \leq m} (a_i+p )     .$$
\end{itemize}
\end{proposition}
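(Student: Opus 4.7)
The proof mirrors Proposition \ref{barsi}, treating the cases $w>1$ and $w=1$ separately. In each case, the mixed part is obtained by listing explicitly the mixed bars that differ between $\laOw$ and $\laOwm$ and cancelling the common terms; the unmixed part is obtained either by invoking \cite[Theorem 9.1]{Malle-Navarro} on the underlying partitions $(\laOw)^*$ and $(\laOwm)^*$, or, when this is not available, by a direct accounting of added and removed unmixed bars.

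For $w>1$, the $\beta$-sets $X_{\laOw}=X_\ga\cup\{pw\}$ and $X_{\laOwm}=X_\ga\cup\{p(w-1)\}$ differ only by the position of a single bead on runner $0$, so the partitions $(\laOw)^*$ and $(\laOwm)^*$ share the $p$-core $\ga^*$ and have $p$-weights $w$ and $w-1$; applying \cite[Theorem 9.1]{Malle-Navarro} gives the unmixed ratio $pw\prod_{0<j\le p-1}|p(w-1)-e_j|$ directly. For the mixed part, the only bars that differ are those pairing the "moving" element with each $a_i\in X_\ga$: in $\laOw$ they have lengths $pw+j+kp$, and in $\laOwm$ they have lengths $p(w-1)+j+kp$, as $j$ runs over classes with $X_j\neq\emptyset$ and $0\le k\le d_j$. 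The identity $p(w-1)+j+(k+1)p=pw+j+kp$ causes a telescoping cancellation within each class, leaving only $(e_j+pw)$ at the top and $(p(w-1)+j)$ at the bottom, yielding the claimed formula $\prod_{j\neq 0,X_j\neq\emptyset}(e_j+pw)/(p(w-1)+j)$.

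For $w=1$, there is no telescoping since $\laOwm=\ga$ has no bead to shift. The mixed ratio is straightforward: the only mixed bars of $\laOw$ not present in $\ga$ pair $p$ with each $a_i\in X_\ga$, giving lengths $p+a_i$, whence $\bH_m(\laOw)/\bH_m(\ga)=\prod_i(a_i+p)$, which rewrites as $\prod_{j\neq 0,X_j\neq\emptyset}\prod_{0\le k\le d_j}(p+j+kp)$ by grouping $a_i$ according to its class $X_j$. For the unmixed ratio, I would list three kinds of changes directly from the definition of bar lengths: (i) one new type-$2$ bar of length $p$; (ii) new type-$1$ bars of length $p-j=|e_j|$ for each $j\in(0,p)$ with $X_j=\emptyset$ (using that $X_\ga\cap(0,p)=\{j:X_j\neq\emptyset\}$, since $j\in X_j$ whenever $X_j\neq\emptyset$); and (iii) the loss, for each $a_i\in X_\ga$ with $a_i>p$, of the type-$1$ bar of $\ga$ of length $a_i-p$, because $p\notin X_\ga$ was a valid left-end in $\ga$ but is blocked in $\laOw$. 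The lengths removed in (iii) are exactly the non-maximal elements of each $X_j$, so their product factors as $\prod_i a_i/\prod_{X_j\neq\emptyset}e_j$. Combining (i)--(iii) and using the identity $\prod_{j\neq 0}|e_j|=\prod_{X_j=\emptyset,j\neq 0}|e_j|\cdot\prod_{X_j\neq\emptyset}e_j$ produces $p\prod_{j\neq 0}|e_j|/\prod_{1\le i\le m}a_i$.

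The main obstacle is the assembly of the $w=1$ unmixed case: one must correctly identify the "removed" bars corresponding to $p$ being blocked as a left-end, observe that these lengths are precisely the non-maximal elements of each nonempty class $X_j$, and then notice the identity that glues together the empty and nonempty classes. Everything else is routine enumeration.
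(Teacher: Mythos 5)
Your argument is correct, and for the unmixed bars it takes a genuinely different route from the paper. The paper treats $w>1$ and $w=1$ uniformly by a single case analysis on the residues of $x$ and $y$ modulo $p$ (the three cases $x\equiv y\equiv 0$, $y\equiv 0\neq x$, $x\equiv 0\neq y$), with telescoping products inside each residue class; you instead split by $w$, invoking \cite[Theorem 9.1]{Malle-Navarro} for $w>1$ (exactly the alternative the paper itself offers in the proof of Proposition \ref{barsi}) and giving a direct added/removed accounting for $w=1$. Your $w=1$ accounting is clean and complete: the new bars are precisely those ending at $p$ (your (i) and (ii), using $X_\ga\cap(0,p)=\{j: X_j\neq\emptyset\}$), the lost bars are precisely those of $\ga$ starting at $p$ (your (iii)), and the identification of the removed lengths with the non-maximal elements of the classes $X_j$ is exactly right; this is arguably more transparent than the paper's uniform computation, whose $w=1$ branch produces the leftover factor $\prod_i a_i$ in the denominator in a less visible way. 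One imprecision to fix in the $w>1$ step: the common $p$-core of $(\laOw)^*$ and $(\laOwm)^*$ is the partition with $\beta$-set $X_\ga\cup\{0\}$, not $\ga^*$ (whose $\beta$-set is $X_\ga$); equivalently, the reference position of the moving bead on runner $0$ is $0$, not $e_0=-p$. This is why the correct formula is $pw\prod_{0<j\le p-1}|p(w-1)-e_j|$ rather than what one would get by blindly setting $i=0$ in the formula of Proposition \ref{barsi}. Since you apply the relative hook formula to the pair $(\laOw)^*$, $(\laOwm)^*$ directly, the conclusion is unaffected, but the hypothesis as you state it should be corrected. The mixed-bar computations coincide with the paper's.
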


\begin{proof}
We start with the unmixed bars. All the unmixed bars $(x, \, y)$ in $\laOw$ such that $x \not \equiv 0 \modp$ and $y \not \equiv 0 \modp$ are also in $\laOwm$, and conversely. Hence we just need to consider the unmixed bars $(x, \, y)$ with $x \equiv 0 \modp$ or $y \equiv 0 \modp$.

Those with $x \equiv y \equiv 0 \modp$ contribute exactly $pw$ to $\frac{\bH_u(\laOw)}{\bH_u(\laOwm)}$.

Next suppose $y \equiv 0 \modp$, and $x \equiv j \modp$, with $0< j \leq p-1$. The bar lengths to consider are thus:
\begin{itemize}
\item
in $\laOw$: $\{ pw - x_k^{(j)} \, | \, j \neq 0, \, d_j \leq w-2, \, 1 \leq k \leq w-d_j-1 \}$, where $x_k^{(j)}=e_j+kp$;
\item
in $\laOwm$: $\{ p(w-1) - x_k^{(j)} \, | \, j \neq 0, \, d_j \leq w-3, \, 1 \leq k \leq w-d_j-2 \}$. These only appear if $w>1$; however, since, when $w=1$, $\{j \, | \, d_j \leq w-3\}=\emptyset$, the result applies in this case too.
\end{itemize}
Now, for any $j\neq 0$ such that $d_j \leq w-3$, we have $pw - x_k^{(j)}=p(w-1)-x_{k-1}^{(j)}$ for all $1 \leq k \leq w-d_j-1$, so that everything cancels out in $\frac{\bH_u(\laOw)}{\bH_u(\laOwm)}$, except for $k=1$. We're thus left exactly with $pw-x_1^{(j)}=p(w-1)-e_j$. If, on the other hand, $d_j=w-2$, then the only bar that appears is given by $k=1$ in $\laOw$, and it contributes $p(w-1)-e_j$ to $\frac{\bH_u(\laOw)}{\bH_u(\laOwm)}$.

\smallskip
Finally, suppose $x \equiv 0 \modp$, and $y \equiv j \modp$, with $0< j \leq p-1$. Now, for $X \in \{ X_{\laOw}, \, X_{\laOwm} \}$, we must have $x=pr \not \in X$, $y=j+kp \in X$, whence $k \leq d_j$, and $y>x$, so that $k \geq r$. Hence we just need to exclude $r=w$ in $\laOw$ (since $wp \in X_{\laOw}$), and $r=w-1$ in $\laOwm$ (since $(w-1)p \in X_{\laOwm}$), except if $w=1$ (in which case $x=(w-1)p=0 \not \in X_{\laOwm}=X_{\ga}$). We thus get, for the product of these bar lengths,
\begin{itemize}
\item
in $\laOw$: $\displaystyle \prod_{r \geq 0, \, r \neq w} \; \; \prod_{r \leq k \leq d_j \; (d_j \geq r)} (j+kp-pr)$;
\item
in $\laOwm$: $\displaystyle \prod_{r \geq 0, \, r \neq w-1 \, \mbox{\tiny{if}} \, w>1} \; \; \prod_{r \leq k \leq d_j \; (d_j \geq r)} (j+kp-pr)$.
\end{itemize}
After cancellations (corresponding to fixed $r \not \in \{w, \, w-1 \}$), we're left with
\begin{itemize}
\item
in $\laOw$: $\displaystyle \prod_{w-1 \leq k \leq d_j  \; (d_j \geq w-1)} (j+kp-p(w-1))$;
\item
in $\laOwm$: $\displaystyle \prod_{w \leq k \leq d_j  \; (d_j \geq w)} (j+kp-pw)$, 

and also, if $w=1$, $\displaystyle \prod_{w-1 \leq k \leq d_j \; (d_j \geq w-1)} (j+kp-p(w-1))$.
\end{itemize}
After further cancellations, we see that, whether $d_j \geq w$ or $d_j=w-1$, we're only left with $k=d_j$ in $\laOw$, which contributes $j+d_jp-p(w-1)=e_j-p(w-1)$ to $\frac{\bH_u(\laOw)}{\bH_u(\laOwm)}$. In addition to this, and only in the case $w=1$, we have, left in $\laOwm$, $\displaystyle \prod_{w-1 \leq k \leq d_j \; (d_j \geq w-1)} (j+kp-p(w-1))=\prod_{0 \leq k \leq d_j \; (d_j \geq 0)} (j + kp) = \prod_{1 \leq i \leq m} a_i$.

\smallskip
Putting together the three cases for the values of $x$ and $y$, we obtain the announced expressions for $\frac{\bH_u(\laOw)}{\bH_u(\laOwm)}$.

\medskip
Turning now to mixed bars, we see that the only ones which are not common to $\laOw$ and $\laOwm$ have lengths

\begin{itemize}
\item
in $\laOw$: $\{ pw + x_k^{(j)} \, | \, j \neq 0, \, X_j \neq \emptyset, \, 0 \leq k \leq d_j \}$, where $x_k^{(j)}=j+kp$;
\item
in $\laOwm$, only if $w>1$: $\{ p(w-1) + x_k^{(j)} \, | \, j \neq 0, \, X_j \neq \emptyset, \, 0 \leq k \leq d_j \}$.
\end{itemize}
If $w>1$, then $pw+x_k^{(j)}=p(w-1)+x_{k+1}^{(j)}$, so that we're only left with $k=d_j$ in $\laOw$ and $k=0$ in $\laOwm$. This yields
$$ \displaystyle \frac{\bH_m(\laOw)}{\bH_m(\laOwm)} = \prod_{j \neq 0, \, X_j \neq \emptyset} \frac{pw+j+d_jp}{p(w-1)+j}.$$
If $w=1$, then we obtain$$ \displaystyle \frac{\bH_m(\laOw)}{\bH_m(\laOwm)} = \prod_{j \neq 0, \, X_j \neq \emptyset} \; \;  \prod_{0 \leq k \leq d_j} (p+j+kp) = \prod_{1 \leq i \leq m} (a_i+p ),$$
as announced.

\end{proof}

\begin{corollary}\label{barszero}
$$\displaystyle \frac{\bH(\laOw)}{\bH(\laOwm)} =\left\{ \begin{array}{ll}  \displaystyle pw \prod_{ j \neq 0} | p(w-1) - e_j |  \; \; \prod_{j \neq 0} \frac{pw+e_j}{p(w-1)+j}  & \mbox{if} \; w>1,  \\ \displaystyle  p \prod_{j \neq 0} |e_j| \prod_{1 \leq i \leq m} \frac{a_i + p}{a_i} & \mbox{if} \; w=1. \end{array} \right. $$
\end{corollary}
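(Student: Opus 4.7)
The plan is to derive the Corollary directly from the preceding Proposition by taking the product of the two ratios it provides, namely $\frac{\bH(\laOw)}{\bH(\laOwm)} = \frac{\bH_u(\laOw)}{\bH_u(\laOwm)} \cdot \frac{\bH_m(\laOw)}{\bH_m(\laOwm)}$. The case $w=1$ requires essentially no manipulation beyond multiplication: the unmixed ratio contains $\prod_{1 \leq i \leq m} a_i$ in its denominator and the mixed ratio contributes $\prod_{1 \leq i \leq m} (a_i+p)$, and combining them produces the $\prod_{1 \leq i \leq m}(a_i+p)/a_i$ factor demanded by the second branch of the Corollary, with the $p \prod_{j \neq 0} |e_j|$ factor inherited unchanged from the unmixed ratio.

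The case $w > 1$ involves one small bookkeeping point. The Proposition writes the mixed ratio as a product over those $j \neq 0$ with $X_j \neq \emptyset$, whereas the Corollary presents it as a product over \emph{all} $j \neq 0$. To reconcile these I would argue as follows: if $X_j = \emptyset$ then by our convention $d_j = -1$, so $e_j = j + d_j p = j - p$ and hence $pw + e_j = pw + j - p = p(w-1) + j$. The would-be factor $\frac{pw + e_j}{p(w-1)+j}$ therefore equals $1$, so extending the mixed product to include all $j \neq 0$ leaves its value unchanged. Multiplying by the unmixed ratio, which is already indexed over all $j \neq 0$, then yields the first branch of the Corollary verbatim.

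Since the Proposition does the real combinatorial work of identifying which bar lengths cancel between $\laOw$ and $\laOwm$ and which survive, this Corollary is essentially a bookkeeping consequence. The only delicate step is the index-convention reconciliation in the mixed ratio for $w > 1$ described above, and this is the sort of compatibility one should expect given how $e_j$ was defined so as to behave uniformly whether $X_j$ is empty or not; no further sign analysis or simplification is required.
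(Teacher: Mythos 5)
Your proposal is correct and is exactly the intended derivation: the paper states the corollary without proof as an immediate consequence of multiplying the unmixed and mixed ratios from the Proposition, and your reconciliation of the index sets (that $X_j=\emptyset$ forces $e_j=j-p$, making the extra mixed factors equal to $1$) is precisely the one bookkeeping observation needed to make the $w>1$ branch match verbatim.
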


We can now show that it's always possible to construct two bar partitions with same bar core $\ga \neq \emptyset$ and weight $w \geq 1$ such that the corresponding products of bar lengths are distinct.

\begin{theorem}\label{bars}
Suppose there exist $i_1 \neq i_2$ such that $i_1, \, i_2 >0$, $X_{i_1} \neq \emptyset$ and $X_{i_2} \neq \emptyset$, and suppose $e_{i_1} > e_{i_2} > e_j$ for all $j \not \in \{i_1, \, i_2 \}$. Then, whenever $w \geq 1$, we have $\bH(\la_{i_1}^{(w)})>\bH(\la_{i_2}^{(w)})$. If, on the other hand, there exists a unique $i \neq 0$ such that $X_i \neq \emptyset$, then, whenever $w \geq 1$, we have $\bH(\laiw)>\bH(\laOw)$.

\end{theorem}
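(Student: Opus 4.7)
The plan is to prove both parts of Theorem~\ref{bars} by induction on $w \geq 1$. Since $\la_{i_1}^{(0)} = \la_{i_2}^{(0)} = \la_0^{(0)} = \ga$, we have $\bH(\la_i^{(0)}) = \bH(\ga)$ for each $i \in \{0, i_1, i_2\}$, so it suffices to show that the ratio $R_i^{(w)} := \bH(\la_i^{(w)})/\bH(\la_i^{(w-1)})$ satisfies $R_{i_1}^{(w)} \geq R_{i_2}^{(w)}$ for all $w \geq 1$ (respectively $R_i^{(w)} \geq R_0^{(w)}$ in part (b)), with strict inequality at $w=1$. Multiplying these inequalities telescopically then yields $\bH(\la_{i_1}^{(w)}) > \bH(\la_{i_2}^{(w)})$ as required.

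For part (a), I unfold $R_{i_1}^{(w)}/R_{i_2}^{(w)}$ using Corollary~\ref{barstwo}, set $a = e_{i_1}$ and $b = e_{i_2}$, and organize the factors after cancellation into four groups: the \emph{crossed} quotient $|p(w-1)+a-b|/|p(w-1)+b-a|$, which equals $1$ at $w=1$ (its denominator being nonzero since $a-b \not\equiv 0 \pmod{p}$) and is $>1$ for $w \geq 2$; a product $\prod_{e \in S \setminus \{a,b\}} (p(w-1)+a-e)(pw+a+e)/[(p(w-1)+b-e)(pw+b+e)]$, each factor $>1$ because $a > b > e$; a $K$-product $\prod_{k \in K} (pw+a-k)/(pw+b-k) > 1$; and the potentially small \emph{head} factor $(p(w-1)+a+i_1)/(p(w-1)+b+i_2)$. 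My plan is to write this last factor as $(pw+a-(p-i_1))/(pw+b-(p-i_2))$, treating $p-i_\alpha \in L$ as an extra $k$-index and amalgamating it with the $K$-product. The merged expressions $\prod_{k \in K \cup \{p-i_1\}} (pw+a-k)$ over $\prod_{k \in K \cup \{p-i_2\}} (pw+b-k)$ can then be compared by pairing the augmented sets in increasing order; the per-position differences telescope to $(p-i_1)-(p-i_2) = i_2 - i_1$ and are absorbed by $a-b$ when $K$ contains enough residues between $p-i_2$ and $p-i_1$. In the remaining cases the extra $J$-factors in the second group (which, together with $K$ and $L$, partition all residues and so are numerous precisely when $K$ is small) provide the needed compensation.

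For part (b), the hypothesis $J = \{i\}$ drastically simplifies both $R_i^{(w)}$ (from Corollary~\ref{barstwo}) and $R_0^{(w)}$ (from Corollary~\ref{barszero}): the middle product vanishes, $X_\ga = \{i, i+p, \ldots, e_i\}$, and telescoping yields $\prod_\ell (a_\ell+p)/a_\ell = (e_i+p)/i$. An explicit computation reduces the ratio $R_i^{(1)}/R_0^{(1)}$ at $w=1$ to $\binom{e_i+p-1}{p-1} \cdot i(p-i)/[e_i(e_i+p-i)]$, which is checked to exceed $1$ by a short induction on $d_i \geq 0$; for $w \geq 2$, the same sorted-pairing technique as in part (a) yields $R_i^{(w)} \geq R_0^{(w)}$. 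The main obstacle throughout is the asymmetric head factor $(p(w-1)+e_i+i)$, whose mixed dependence on $e_i$ and the residue $i$ obstructs any naive monotonicity argument; reinterpreting it as a boundary $K$-type factor and absorbing it into an enlarged $K$-product is the central organizational trick.
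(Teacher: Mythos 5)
Your global strategy---telescoping $\bH(\la_i^{(w)})/\bH(\la_i^{(0)})$ and comparing the one-step ratios factor by factor using Corollaries \ref{barstwo} and \ref{barszero}---is exactly the paper's, and your treatment of the crossed, $J$- and $K$-factors coincides with it. You also correctly isolate the one factor that is genuinely problematic: the head quotient $(p(w-1)+e_{i_1}+i_1)/(p(w-1)+e_{i_2}+i_2)$ really can be less than $1$ (take $p=7$, $\ga=(9,6,2)$, $i_1=2$, $i_2=6$: then $e_{i_1}+i_1=11<12=e_{i_2}+i_2$), whereas the paper simply asserts $e_{i_1}+i_1>e_{i_2}+i_2$. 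So your instinct that this factor needs compensating is sound. The gap is that the compensation is never performed: ``absorbed by $a-b$ when $K$ contains enough residues between $p-i_2$ and $p-i_1$'' and ``in the remaining cases the extra $J$-factors provide the needed compensation'' constitute the entire content of the decisive step, with no delineation of the cases and no inequality proved in either one. (After sorted pairing, a single position can carry a difference $i_2-i_1$ exceeding $a-b$ whenever $i_2-i_1>p(d_{i_1}-d_{i_2})/2$, so the bad case is real and the appeal to $J$-factors is quantitative, not formal.) Likewise in part (b), your reduced expression $\binom{e_i+p-1}{p-1}\,i(p-i)/[e_i(e_i+p-i)]$ at $w=1$ is correct, but ``checked to exceed $1$ by a short induction'' and ``the same sorted-pairing technique yields $R_i^{(w)}\ge R_0^{(w)}$'' are assertions, not arguments. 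As it stands this is a plausible plan, not a proof.

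There is a cleaner way out of the head-factor difficulty, which also explains why it looks anomalous: Corollary \ref{barstwo} is itself misstated. In Proposition \ref{barsi} the mixed bars between the moved part $e_i+pw$ and the \emph{other} parts of $X_i$ (namely $i,i+p,\dots,e_i-p$, present whenever $d_i\ge 1$) are omitted; restoring them multiplies the one-step ratio by $(p(w-1)+2e_i)/(p(w-1)+e_i+i)$, i.e.\ the head factor should be $p(w-1)+2e_i$. (Check: $p=3$, $\ga=(4,1)$, $w=1$ gives $\bH(\la_1^{(1)})/\bH(\ga)=6720/40=168=3\cdot 8\cdot 7$, not $3\cdot 5\cdot 7$.) With the corrected head factor the comparison is immediate from $e_{i_1}>e_{i_2}$ and the whole amalgamation with the $K$-product disappears. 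A similar correction is needed in part (b), where the paper's own simplification replaces $\prod_{j\neq 0}|e_j|=e_i(p-1)!/(p-i)$ by $(p-1)!$; your $w=1$ formula is consistent with the correct value, which is to your credit, but the right move is to fix the bookkeeping in Proposition \ref{barsi} and Corollary \ref{barstwo} first rather than to fight the uncorrected formulas.
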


\begin{proof}

First suppose $i_1, \, i_2 >0$, $X_{i_1} \neq \emptyset$, $X_{i_2} \neq \emptyset$, and $e_{i_1} > e_{i_2} > e_j$ for all $j \not \in \{i_1, \, i_2 \}$. We use Corollary \ref{barstwo}.

Whenever $X_k=X_{p-k}=\emptyset$, we have $k \not \in \{i_1, \, i_2 \}$, and $pw+e_{i_1}-k > pw + e_{i_2}-k>0$ as soon as $w \geq 1$. Thus, unless both products are empty,
$$ \displaystyle \prod_{X_k = X_{p-k} = \emptyset} ( pw+e_{i_1}-k ) >  \prod_{X_k = X_{p-k} = \emptyset} ( pw+e_{i_2}-k ).$$
If, on the other hand, $j \not \in \{i_1, \, i_2 \}$ and $X_j \neq \emptyset$, then $0 < e_j < e_{i_2} < e_{i_1}$. Hence $e_{i_1}+e_j, \, e_{i_2}+e_j, \, e_{i_1}-e_j, \, e_{i_2}-e_j >0$. Thus $0<pw+e_{i_2}+e_j<pw+e_{i_1}+e_j$, and
$$|p(w-1)+e_{i_2}-e_j|=p(w-1)+e_{i_2}-e_j< p(w-1)+e_{i_1}-e_j=|p(w-1)+e_{i_1}-e_j|,$$whence
$$|p(w-1)+e_{i_1}-e_j| (pw+e_{i_1}+e_j) > |p(w-1)+e_{i_2}-e_j| (pw+e_{i_2}+e_j).$$
Finally, $e_{i_1}-e_{i_2}>0$, so that $|p(w-1)+e_{i_1}-e_{i_2}| = p(w-1)+e_{i_1}-e_{i_2}$ and
$$|p(w-1)+e_{i_1}-e_{i_2}| (pw+e_{i_1}+e_{i_2}) \geq |p(w-1)+e_{i_2}-e_{i_1}| (pw+e_{i_2}+e_{i_1}),$$and this last inequality is in fact strict unless $w=1$, in which case the spin block we consider has abelian defect.

We thus obtain
$$\displaystyle \prod_{j \neq i_1, \, X_j \neq \emptyset} | p(w-1) + e_{i_1} - e_j |   (pw+e_{i_1}+e_j) > 
 \prod_{j \neq i_2, \, X_j \neq \emptyset} | p(w-1) + e_{i_2} - e_j |   (pw+e_{i_2}+e_j) .$$
 
 Finally, for each $k \in \{1, \, 2 \}$, we have $e_{i_k}=i_k+pd_{i_k}$. Now, since $e_{i_1} > e_{i_2}$, we have either $d_{i_1}>d_{i_2} \geq 0$, or $d_{i_1}=d_{i_2}$. If $d_{i_1}>d_{i_2} \geq 0$, then $pd_{i_1} \geq pd_{i_2}+p$, and $e_{i_1} + i_1 > e_{i_2} + i_2$ (since $0 < i_1, \, i_2 \leq p$). If, on the other hand, $d_{i_1}=d_{i_2}$, then $i_1 > i_2$, so that $e_{i_1} + i_1 > e_{i_2} + i_2$.
 
We thus have $p(w-1)+e_{i_1} + i_1 > p(w-1)+e_{i_2} + i_2$, whence, by Corollary \ref{barstwo}, $$\displaystyle \frac{\bH (\la_{i_1}^{(w)}) }{\bH (\la_{i_1}^{(w-1)}) }   >   \frac{\bH (\la_{i_2}^{(w)}) }{\bH (\la_{i_2}^{(w-1)}) } \; \; \; \mbox{whenever  } w \geq 1.$$
Since $\la_{i_1}^{(0)} = \la_{i_2}^{(0)} = \ga$, induction on $w$ gives that $\bH(\la_{i_1}^{(w)})>\bH(\la_{i_2}^{(w)})$ if $w \geq 1$.

\smallskip
Suppose now that there exists a unique $i \neq 0$ such that $X_i \neq \emptyset$. Then $X=X_i= \{ i+kp, \, 0 \leq k \leq d_i \}= \{ a_m, \, \ldots , \, a_1\}=\{ i , \, i+p, \, \ldots , \, e_i \}$. Also, $e_i=i+d_ip \geq i$, and $e_j=j-p$ whenever $j \neq i$. By Proposition \ref{barsi}, we have, whenever $w \geq 1$,
$$\begin{array}{rl} \displaystyle \frac{\bH(\laiw)}{\bH(\laiwm)} & \displaystyle = pw \prod_{0 \leq j \neq i \leq p-1} | p(w-1) + e_i - (j-p) | \; \; \mbox{(the other product being 1)}  \\ & \displaystyle = pw \prod_{0 \leq j \neq i \leq p-1} | pw + e_i - j | = pw \prod_{0 \leq j \neq i \leq p-1} ( pw + e_i - j ). \end{array}$$
On the other hand, by Corollary \ref{barszero}, we have, if $w=1$,
$$\begin{array}{rl} \displaystyle \frac{\bH(\laOw)}{\bH(\laOwm)}  & = \displaystyle  p \prod_{j \neq 0} |e_j| \prod_{1 \leq k \leq m} \frac{a_k + p}{a_k}  = \displaystyle  p \left( \prod_{j \neq 0} |j-p| \right) \frac{e_i+p}{i} \\ & = \displaystyle  p \left( \prod_{ 1 \leq j \leq p-1 } (p-j) \right) \frac{e_i+p}{i} = p (p-1)! \displaystyle  \frac{e_i+p}{i} = p! \displaystyle  \frac{e_i+p}{i}, \end{array}$$
while, if $w>1$, then we have
$$\begin{array}{rl} \displaystyle \frac{\bH(\laOw)}{\bH(\laOwm)}  & = \displaystyle pw \prod_{ j \neq 0} | p(w-1) - e_j  | \; \; \mbox{(the other product being 1)} \\  & = \displaystyle pw | p(w-1) - e_i  | 
 \prod_{ 1 \leq j \neq i \leq p-1} | p(w-1) - (j-p)  |   \\ & = \displaystyle pw | p(w-1) - e_i  | 
 \prod_{ 1 \leq j \neq i \leq p-1} (pw - j) . \end{array} $$
If $w>1$, then, whenever $1 \leq j \neq i \leq p-1$, we have $pw-j<pw+e_i-j$, and (for $j=0$) $|p(w-1)-e_i| < p(w-1) + e_i < pw+e_i-0$. Hence, in this case,
$$\displaystyle \frac{\bH(\laiw)}{\bH(\laiwm)} > \frac{\bH(\laOw)}{\bH(\laOwm)}.$$
If $w=1$, then
$$\displaystyle \frac{\bH(\la_i^{(1)})}{\bH(\la_i^{(0)})}=p \prod_{ 0 \leq j \neq i \leq p-1} (p+e_i-j)=p (p+e_i) \prod_{ 1 \leq j \neq i \leq p-1} (p+e_i-j).$$
Now $\prod_{ 1 \leq j \neq i \leq p-1} (p+e_i-j) \geq \prod_{ 1 \leq j \neq i \leq p-1} (p+i-j)$ (since $e_i \geq i >0$). We rewrite $ \prod_{ 1 \leq j \neq i \leq p-1} (p+i-j)$ as $  \prod_{ 1 \leq j  \leq i-1} (p+i-j)\prod_{ i+1 \leq j  \leq p-1} (p+i-j)$. Then $  \prod_{ 1 \leq j  \leq i-1} (p+i-j)= (p+1)(p+2) \cdots (p+i-1)> 1.2 \cdots (i-1)=(i-1)!$, and $\prod_{ i+1 \leq j  \leq p-1} (p+i-j) =(p-1)(p-2) \cdots (p+i-(p-1))=\frac{(p-1)!}{i!}$. Hence $\prod_{ 1 \leq j \neq i \leq p-1} (p+e_i-j)> \frac{(i-1)!(p-1)!}{i!}=\frac{(p-1)!}{i}$ and$$\displaystyle \frac{\bH(\la_i^{(1)})}{\bH(\la_i^{(0)})}>p(p+e_i)\frac{(p-1)!}{i}=p!\frac{p+e_i}{i}= \frac{\bH(\la_0^{(1)})}{\bH(\la_0^{(0)})}.$$
Since $\la_i^{(0)}=\la_0^{(0)}=\ga$, induction on $w$ yields that $\bH(\laiw)>\bH(\laOw)$ whenever $w \geq 1$.

\end{proof}

Finally, we deal with the case of the principal spin blocks, that is the spin blocks of $2.\sym_n$ and $2.\A_n$ labelled by the empty bar core.

\begin{proposition}\label{gavide}

For any $w \geq 2$ and odd prime $p$, the bar partitions $\mu_0^{(w)}=(pw)$ and $\mu_1^{(w)}=(pw-1, \, 1)$ of $pw$ satisfy $\bH(\mu_0^{(w)}) > 2 \bH(\mu_1^{(w)})$.

\end{proposition}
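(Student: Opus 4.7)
The plan is to compute both products $\bH(\mu_0^{(w)})$ and $\bH(\mu_1^{(w)})$ directly from the definition of bar lengths and then take the ratio.

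First I would handle $\mu_0^{(w)} = (pw)$, which has only one part $a_1 = pw$ and no pairs $(i,j)$ with $i < j$. Thus its set of bar lengths is simply $\{1, 2, \ldots, pw\}$, and $\bH(\mu_0^{(w)}) = (pw)!$.

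Next I would enumerate the bars of $\mu_1^{(w)} = (pw - 1, 1)$, with $X_\ga = \{1, pw-1\}$. The type 3 bar $(1,2)$ has length $a_1 + a_2 = pw$. The type 2 bars correspond to $y = 1$ (length $1$) and $y = pw - 1$ with $x = 0$ (length $pw - 1$). The type 1 bars correspond to $y = pw - 1$ and $x \in \{2, 3, \ldots, pw - 2\}$ (so $x \notin X_\ga$), giving lengths $\{1, 2, \ldots, pw - 3\}$; crucially, the value $pw - 2 = a_1 - a_2$ is excluded. Multiplying together, one obtains
\[
\bH(\mu_1^{(w)}) = 1 \cdot 1 \cdot (pw-1) \cdot pw \cdot (pw-3)! = \frac{(pw)!}{pw - 2}.
\]

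The conclusion then follows from the clean ratio
\[
\frac{\bH(\mu_0^{(w)})}{\bH(\mu_1^{(w)})} = pw - 2,
\]
since the hypotheses $w \geq 2$ and $p$ odd force $pw \geq 6$, hence $pw - 2 \geq 4 > 2$.

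There is no real obstacle here; the one place to be careful is in listing the bars of $\mu_1^{(w)}$, specifically in remembering to remove $a_1 - a_2 = pw - 2$ from the unmixed bar lengths coming from the part $pw - 1$. Once this is done correctly, the telescoping product of $\bH(\mu_1^{(w)})$ falls out at once, and the inequality is immediate from the numerical constraints on $p$ and $w$.
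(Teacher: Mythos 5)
Your proof is correct and follows essentially the same route as the paper: both enumerate the bar lengths of the two partitions explicitly, observe that $\bH(\mu_1^{(w)})=(pw)!/(pw-2)$, and conclude from $pw-2>2$. Your version merely spells out the bookkeeping (in particular the exclusion of $a_1-a_2=pw-2$) in more detail than the paper's one-line argument.
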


\begin{proof}

This is obvious, since the bar lengths in $\mu_0^{(w)}$ are $\{ 1, \, 2, \, \ldots , \, pw-1, \, pw\}$, while those in $\mu_1^{(w)}$ are $\{ 1, \, 2, \, \ldots , \, pw-3, \, pw-1, \, pw, \, 1\}$, and, since $w \geq 2$ and $p \geq 3$, we have $pw-2 >2$.

\end{proof}

\section{Height zero spin characters of $2.\sym_n$ and $2.\A_n$}\label{restriction}
We can now prove our main result

\begin{theorem}\label{mainresult}
Let $n\geq 4$ be any integer and $p$ be an odd prime. If $B$ is a spin $p$-block of $2.\A_n$ with non-abelian defect groups, then $B$ contains two height 0 characters which have distinct degrees.
\end{theorem}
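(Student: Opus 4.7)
The plan is to reduce the statement to a combinatorial claim about bar partitions and then invoke the constructions of Section \ref{barpartitions}. The block $B$ corresponds to a $\bar p$-core $\ga$ and $\bar p$-weight $w$, and non-abelian defect forces $w\geq 2$. Combining the two cases $\sigma(\la)=\pm 1$ of the behaviour under restriction $2.\sym_n\to 2.\A_n$, every spin character of $2.\A_n$ labelled by a bar partition $\la$ has degree $2^{\lfloor(n-m(\la)-1)/2\rfloor}\,n!/\bH(\la)$. Therefore the task reduces to exhibiting two bar partitions of $n$, both with $\bar p$-core $\ga$ and $\bar p$-weight $w$, both of height zero, for which the quantities $2^{\lfloor(n-m-1)/2\rfloor}/\bH$ differ. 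Since two bar partitions with the same $\bar p$-core label characters of the same spin block of $2.\sym_n$, which in turn covers the unique block $B$, both characters automatically lie in $B$.

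The natural candidates are the bar partitions $\laOw$ and $\laiw$ of Section \ref{barpartitions} (and, in the principal-block case, $(pw)$ and $(pw-1,1)$ from Proposition \ref{gavide}). Because their $(p)$-bars have lengths exactly $p,2p,\ldots,wp$ (all of type $1$, together with at most one of type $2$ for $\laOw$), iterated $(p)$-bar removal realises the maximal $\bar{p^k}$-weight $\lfloor w/p^{k-1}\rfloor$ for every $k\geq 2$, so each such partition has height zero. I then split into three cases. If $\ga\neq\emptyset$ and at least two classes $i_1\neq i_2$ with $i_1,i_2\neq 0$ have $X_{i_1},X_{i_2}\neq\emptyset$, I relabel so that $e_{i_1}>e_{i_2}>e_j$ for every other $j$ (possible since the classes with $X_j=\emptyset$ satisfy $e_j<0$) and apply the first part of Theorem \ref{bars} to $(\la_{i_1}^{(w)},\la_{i_2}^{(w)})$; these share the same $m=m(\ga)$, so the factors of $2$ cancel and the strict inequality $\bH(\la_{i_1}^{(w)})>\bH(\la_{i_2}^{(w)})$ gives distinct degrees. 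If $\ga\neq\emptyset$ admits a unique such class, I apply the second part of Theorem \ref{bars} to $(\laiw,\laOw)$: here $m(\laOw)=m(\laiw)+1$, so the degree ratio in $2.\A_n$ picks up a factor of at most $2$ from the $2$-power mismatch, and I must rule out $\bH(\laiw)=2\bH(\laOw)$. The principal-block case $\ga=\emptyset$ is handled directly by Proposition \ref{gavide}, whose factor-of-$2$ strength already accommodates the analogous shift in $m$.

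The one nontrivial point is the factor-of-$2$ refinement in the unique-$i$ case, since Theorem \ref{bars} on its face only gives $\bH(\laiw)>\bH(\laOw)$. To settle it I would revisit the ratio computation inside the proof of that theorem: a term-by-term inspection of Corollaries \ref{barstwo} and \ref{barszero} shows that for $w\geq 2$ the per-step ratio $(\bH(\laiw)/\bH(\laiwm))/(\bH(\laOw)/\bH(\laOwm))$ is strictly greater than $1$, while the base case $w=1$ gives $\bH(\la_i^{(1)})/\bH(\la_0^{(1)})\geq 2$ by a direct computation, the minimum being attained when $e_i=i$ and following from the elementary bound $(p-i)\binom{p+i-1}{i}\geq 2p$. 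Multiplying these contributions yields $\bH(\laiw)>2\bH(\laOw)$ whenever $w\geq 2$, precisely the regime of non-abelian defect, so the two degrees in $2.\A_n$ remain distinct. The three cases then combine to produce the required pair of height-zero characters in $B$ with distinct degrees.
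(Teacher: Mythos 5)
Your skeleton is the same as the paper's: split into $\ga=\emptyset$, ``two nonempty classes'', and ``unique nonempty class'', feed the pairs $(\mu_0^{(w)},\mu_1^{(w)})$, $(\la_{i_1}^{(w)},\la_{i_2}^{(w)})$, $(\laiw,\laOw)$ into Proposition \ref{gavide} and Theorem \ref{bars}, and track the power of $2$ coming from the splitting behaviour under restriction to $2.\A_n$. Your uniform degree formula $2^{\lfloor(n-m(\la)-1)/2\rfloor}\,n!/\bH(\la)$ is correct and tidily absorbs the paper's four separate parity discussions, and your observation that the $(p)$-bars of $\laiw$ and $\laOw$ have lengths exactly $p,2p,\dots,wp$ is a reasonable justification of the height-zero claim that the paper leaves implicit.

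The one place you genuinely diverge is the obstruction $\bH(\laiw)=2\bH(\laOw)$ in the unique-$i$ case. The paper first rules this out for $p>3$ by a congruence argument: since all $(p)$-bars of $\laiw$ and $\laOw$ are of type 1 or 2, Proposition 2.5 of \cite{JB-IsaacsNavarro} gives $\bH_{p'}(\laiw)\equiv\pm\bH(\ga)$ and $\bH_{p'}(\laOw)\equiv\pm\bH(\ga) \pmod p$, so equality of the $p$-parts plus $\bH(\laiw)=2\bH(\laOw)$ would force $1\equiv\pm2\pmod p$, i.e.\ $p=3$; only for $p=3$ does it resort to the explicit estimates you propose, treating $e_i=i$ and $e_i\geq i+p$ separately. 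Your alternative --- prove $\bH(\la_i^{(1)})\geq2\bH(\la_0^{(1)})$ for every $p$ and then multiply by the strict per-step ratios for $2\leq k\leq w$ --- would work and avoids the external reference, but the step you delegate to ``a direct computation'', namely that the $w=1$ ratio is minimised at $e_i=i$, is the entire content of the argument and is not proven. It is in fact true, and your bound $(p-i)\binom{p+i-1}{i}\geq 2p$ at $e_i=i$ (where the ratio equals $\tfrac{p-i}{p}\binom{p+i-1}{i}$) is correct; but be aware that for $p=3$ the $w=1$ ratio equals \emph{exactly} $2$ for every admissible $e_i$, so there is no monotone slack to exploit there, the strictness must come entirely from the steps $k\geq2$, and establishing the general-$e_i$ value requires carefully tracking the mixed bars between the moved part and the remaining parts of the class $X_i$ itself --- precisely the delicate bookkeeping of Proposition \ref{barsi}. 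So: same architecture, a more computational but less complete treatment of the one subtle sub-case; to make it airtight you must either supply the monotonicity-in-$e_i$ argument or fall back on the paper's congruence reduction to $p=3$.
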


\begin{proof}
Let $\ga$ be the $\p$-core labelling $B$ and the corresponding spin $p$-block $B^*$ of $2.\sym_n$. Let $w$ be the $\p$-weight of $B$ and $B^*$. Since $B$ has non-abelian defect, we have $w \geq p$. We use the notation of Section \ref{barpartitions}, and we write $\Irr_0(B)$ for the set of irreducible (spin) characters of height zero in $B$.

First suppose that $\ga = \emptyset$, so that $n=pw$. Take any two spin characters $\chi_0$ and $\chi_1$ of $2.\sym_{n}$ labelled respectively by the bar partitions $\mu_0^{(w)}=(pw)$ and $\mu_1^{(w)}=(pw-1, \, 1)$, and take spin characters $\psi_0$ and $\psi_1$ appearing in the restrictions to $2.\A_{n}$ of $\chi_0$ and $\chi_1$ respectively. In particular, we have $\psi_0, \, \psi_1 \in \Irr_0(B)$. We have

$$ \displaystyle  \chi_0(1)=2^{\floor{(n-1)/2)}}\frac{n!}{\bH(\mu_0^{(w)})} \; \; \mbox{and} \; \;  \chi_1(1)=2^{\floor{(n-2)/2)}}\frac{n!}{\bH(\mu_1^{(w)})} .$$

First suppose that $n=pw$ is odd. Then $\sa(\mu_0^{(w)})= 1$ and $\sa(\mu_1^{(w)})= -1$, so that, when restricted to $2.\A_n$, $\chi_0$ splits while $\chi_1$ doesn't. We thus have $\psi_0(1)=\frac{1}{2} \chi_0(1)$ and $\psi_1(1)=\chi_1(1)$. Since, in this case, $\floor{(n-2)/2)}=\floor{(n-1)/2)}-1$, we have, by Proposition \ref{gavide}
$$\frac{1}{2} \chi_0(1)=\frac{2^{\floor{(n-1)/2)}-1}n!}{\bH(\mu_0^{(w)})}=\frac{2^{\floor{(n-2)/2)}}n!}{\bH(\mu_0^{(w)})}<\frac{2^{\floor{(n-2)/2)}}n!}{\bH(\mu_1^{(w)})}=\chi_1(1),$$
so that $\psi_0, \, \psi_1 \in \Irr_0(B)$ satisfy $\psi_0(1)<\psi_1(1)$.

\smallskip
Suppose now that $n$ is even. In this case, when restricted to $2.\A_n$, $\chi_1$ splits while $\chi_0$ doesn't, and $\floor{(n-2)/2)}=\floor{(n-1)/2)}$. By Proposition \ref{gavide}, we get
$$\psi_0(1)=\chi_0(1)=\frac{2^{\floor{(n-1)/2)}}n!}{\bH(\mu_0^{(w)})}<\frac{2^{\floor{(n-2)/2)}}n!}{2\bH(\mu_1^{(w)})}=\frac{1}{2}\chi_1(1)=\psi_1(1),$$
so that $\psi_0, \, \psi_1 \in \Irr_0(B)$ satisfy $\psi_0(1)<\psi_1(1)$.

This proves that, whenever the principal spin $p$-block $B$ of $2.\A_n$ has weight $w \geq 2$ (in particular, when $B$ has non-abelian defect group), $B$ contains two height 0 characters with distinct degrees.

\medskip

From now on, we therefore suppose that $\ga \neq \emptyset$. If there exist $i \neq j$ such that $X_i \neq \emptyset$ and $X_j \neq \emptyset$, then we can suppose $e_i>e_j>e_k$ for all $k \not \in \{i, \, j\}$, and consider the bar partitions $\laiw$ and $\lajw$ of $n$ constructed in Section \ref{barpartitions}. They have the same number of parts $m(\laiw)=m(\lajw)=m(\ga)=m$. If $\chi_i$ and $\chi_j$ are spin characters of $2.\sym_n$ labelled by $\laiw$ and $\lajw$ respectively, then, by construction, $\chi_i$ and $\chi_j$ have $p$-height 0, and, by Theorem \ref{bars}
$$ \displaystyle  \chi_i(1)=2^{\floor{(n-m)/2)}}\frac{n!}{\bH(\laiw)} < 2^{\floor{(n-m)/2)}}\frac{n!}{\bH(\lajw)}= \chi_j(1).$$
Also, $\sa(\laiw)=\sa(\lajw)$, so that both $\chi_i$ and $\chi_j$ split when restricted to $2.\A_n$, or none of them does. In both cases, we thus obtain $\psi_i, \, \psi_j \in \Irr_0(B)$ which satisfy $\psi_i(1) < \psi_j(1)$.

\medskip
Now suppose there exists a unique $i$ such that $X_i \neq \emptyset$, and consider the bar partitions $\laiw$ and $\laOw$ of $n$. Then $m(\laiw)=m(\ga)=m$ and $m(\laOw)=m+1$. If $\chi_i$ and $\chi_0$ are spin characters of $2.\sym_n$ labelled by $\laiw$ and $\laOw$ respectively, then, by construction, $\chi_i$ and $\chi_0$ have $p$-height 0, and we have 
$$ \displaystyle  \chi_i(1)=2^{\floor{(n-m)/2)}}\frac{n!}{\bH(\laiw)} \; \; \mbox{and} \; \;  \chi_0(1)=2^{\floor{(n-m-1)/2)}}\frac{n!}{\bH(\laOw)} .$$

First suppose that $n-m$ is even, so that $\sa(\laiw)=1$ and $\sa(\laOw)=-1$. When restricted to $2.\A_n$, $\chi_i$ thus splits, while $\chi_0$ doesn't. Taking characters $\psi_i$ and $\psi_0$ in these restrictions, we obtain, by Theorem \ref{bars}, and since in this case $\floor{(n-m-1)/2)}=\floor{(n-m)/2)}-1$,
$$\frac{1}{2} \chi_i(1)=\frac{2^{\floor{(n-m)/2)}-1}n!}{\bH(\laiw)}=\frac{2^{\floor{(n-m-1)/2)}}n!}{\bH(\laiw)}<\frac{2^{\floor{(n-m-1)/2)}}n!}{\bH(\laOw)}=\chi_0(1),$$
so that $\psi_i, \, \psi_0 \in \Irr_0(B)$ satisfy $\psi_i(1)<\psi_0(1)$.
   
\smallskip
Suppose now that $n-m$ is odd, so that $\sa(\laiw)=-1$ and $\sa(\laOw)=1$. When restricted to $2.\A_n$, $\chi_0$ thus splits, while $\chi_i$ doesn't. Taking characters $\psi_0$ and $\psi_i$ in these restrictions, we have
$$\psi_i(1)= \chi_i(1)=2^{\floor{(n-m)/2)}}\frac{n!}{\bH(\laiw)},$$
and, since in this case $\floor{(n-m-1)/2)}=\floor{(n-m)/2)}$,
$$\psi_0(1)=\frac{1}{2} \chi_0(1)=\frac{2^{\floor{(n-m-1)/2)}}n!}{2\bH(\laOw)}=\frac{2^{\floor{(n-m)/2)}}n!}{2\bH(\laOw)}.$$
We therefore obtain $\psi_0(1) \neq \psi_i(1)$, unless $\bH(\laiw)=2\bH(\laOw)$.

\noindent
To exclude this last possibility, write, for each bar partition $\nu$, $\bH(\nu)=\bH_p(\nu)\bH_{p'}(\nu) $, where $\bH_p(\nu)$ (respectively $\bH_{p'}(\nu) $) is the $p$-part (respectively the $p'$-part) of $\bH(\nu)$. Since $\chi_i$ and $\chi_0$ both have $p$-height 0, we have $\bH_p(\laiw)=\bH_p(\laOw)$. Also, since all the $(p)$-bars in $\laiw$ and $\laOw$ are of type 1 or of type 2, Proposition 2.5 in \cite{JB-IsaacsNavarro} gives $\bH_{p'}(\laiw) \equiv \pm \bH(\ga) \modp$ and $\bH_{p'}(\laOw) \equiv \pm \bH(\ga) \modp$. If $\bH(\laiw)=2\bH(\laOw)$, and since $ \bH(\ga)$ is invertible $\modp$, this implies $1 \equiv \pm 2 \modp$. This can only happen if $p=3$, in which case $1 \equiv  -2 \modp$. 

We therefore suppose that $p=3$, and we first suppose that $e_i \geq i+p$. Looking at the case $w=1$ in the proof of Theorem \ref{bars}, we see that, writing $\{1, \, 2\} \setminus \{i\}=\{j\}$, we have
$$\frac{\bH(\la_i^{(1)})}{\bH(\ga)}=3(3+e_i)(3+e_i-j) \; \; \mbox{and} \; \; \frac{\bH(\la_0^{(1)})}{\bH(\ga)}=3(3+e_i)\frac{2}{i}.$$But since $e_i \geq i+p$, we have $3+e_i-j \geq 6+i-j \in \{ 5, \, 7 \}$, so that $3+e_i-j > \frac{2}{i}$ and $\bH(\la_i^{(1)}) > 2 \bH(\la_0^{(1)})$. Since, for each $2 \leq k \leq w$, we have $ \frac{\bH(\la_i^{(k)})}{\bH(\la_i^{(k-1)})} > \frac{\bH(\la_0^{(k)})}{\bH(\la_0^{(k-1)})}$, we finally obtain that, if $p=3$ and $e_i \geq i+p$, then $\bH(\laiw)>2 \bH(\laOw)$. Thus, in this case, we get, as above, $\psi_i, \, \psi_0 \in \Irr_0(B)$ such that $\psi_0(1) \neq \psi_i(1)$.

The last case to study is thus when $p=3$ and $e_i=i$. In this case, we have $\ga=(i)$, $\laiw=(pw+i)$ and $\laOw=(pw,\, i)$. The bar lengths in $\laiw$ are $\{ 1, \, 2, \, \ldots , \, pw+i \}$, while those in $\laOw$ are $\{ 1, \, \ldots , \, pw-(i+1), \, pw-(i-1), \, \ldots , \, pw, \, pw+i, \, 1, \, \ldots , \, i\}$. We thus have
$$\frac{\bH(\laiw)}{\bH(\laOw)}= \left\{ \begin{array}{ll} pw-1 & \mbox{if } i=1, \\ \frac{(pw+1)(pw-2)}{2}  & \mbox{if } i=2. \end{array} \right. $$Since $p=3$ and $w \geq 2$, we obtain in both cases that $\bH(\laiw)>2 \bH(\laOw)$. Thus, in this case also, we get $\psi_i, \, \psi_0 \in \Irr_0(B)$ such that $\psi_0(1) \neq \psi_i(1)$.

Finally, we have shown that, if there exists a unique $i$ such that $X_i \neq \emptyset$, then we can find $\psi_i, \, \psi_0 \in \Irr_0(B)$ such that $\psi_0(1) \neq \psi_i(1)$. This ends the proof.

\end{proof}

\begin{corollary}\label{finalresult}
Let $n \geq 4$ be any integer and $p$ be a prime. If $B$ is a $p$-block of $2.\A_n$ all of whose height zero characters have the same degree, then $B$ is nilpotent.

\end{corollary}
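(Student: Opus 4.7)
The plan is to deduce the corollary from Theorem \ref{mainresult} by a short case analysis that has essentially been set up in the preceding discussion. I would split according to whether $p=2$ or $p$ is odd, and in the odd case according to whether $B$ is a spin block or not.

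For $p=2$, I would reuse the argument sketched at the end of Section \ref{covering}: by \cite[Corollary 9.3]{Malle-Navarro} applied to the corresponding block of $\A_n$, the block $B$ must come from a defect-$0$ block of $\A_n$ and thus have cyclic defect group of order $2$ in $2.\A_n$. Since Brauer's Height Zero Conjecture holds in this situation, $B$ is nilpotent.

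For $p$ odd and $B$ containing no spin character, $B$ is, up to the quotient by $\cyc{z}$, a block of $\A_n$ with the same defect groups; hence the conclusion is immediate from \cite[Corollary 9.3]{Malle-Navarro}.

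The remaining and principal case is $p$ odd and $B$ a spin block. Here I would argue by contraposition: if $B$ had non-abelian defect groups, Theorem \ref{mainresult} would supply two characters in $\Irr_0(B)$ with distinct degrees, contradicting the hypothesis. Hence the defect groups of $B$ are abelian, and since Brauer's Height Zero Conjecture is known for spin blocks by Olsson \cite{Olsson-blocks}, \cite[Theorem 4.1]{Malle-Navarro} applies and yields that $B$ is nilpotent. The real obstacle was already dispatched in Theorem \ref{mainresult}; no further difficulty arises here, only a dispatch of the remaining easy cases.
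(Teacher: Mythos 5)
Your proposal is correct and follows essentially the same route as the paper: dispose of the $p=2$ and odd non-faithful cases via the reductions already recorded in Section \ref{covering} (cyclic or trivial defect groups, then \cite[Corollary 9.3]{Malle-Navarro} and \cite[Theorem 4.1]{Malle-Navarro}), and handle odd spin blocks by contraposition from Theorem \ref{mainresult} together with Olsson's verification of Brauer's Height Zero Conjecture and \cite[Theorem 4.1]{Malle-Navarro}. The only cosmetic difference is that the paper merges your first two cases into a single ``cyclic defect group'' case; the substance is identical.
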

\begin{proof}
As we mentionned in Section \ref{covering}, if $p=2$ or if $p$ is odd and $B$ is a non-faithful block, then $B$ must have cyclic defect groups. Since Brauer's Height Zero Conjecture is obvious in this case (as all the characters are linear), \cite[Theorem 4.1]{Malle-Navarro} implies that $B$ is nilpotent.

If $p$ is odd and $B$ is a spin block, then, by Theorem \ref{mainresult}, $B$ must have abelian defect groups. Since Brauer's Height Zero Conjecture holds for $2.\A_n$ for $p$ odd (see \cite{Olsson-blocks}), we deduce from \cite[Theorem 4.1]{Malle-Navarro} that $B$ is nilpotent.
\end{proof}

%\noindent\textbf{Acknowledgements.}\quad

%Using Theorem~\ref{nbdecAn},
\bibliographystyle{plain}
\bibliography{referencesJB}

\end{document}